\newtheorem{theorem}{Theorem}[section]
\newtheorem{example}[theorem]{Example}
\newtheorem{lemma}[theorem]{Lemma}
\newtheorem{remark}[theorem]{Remark}
\renewenvironment{proof}[1][Proof]{\noindent\textbf{#1.} }{\ \rule{0.5em}{0.5em}}
\newcommand{\m}{{\mathfrak m}}
\newcommand{\g}{{\mathfrak g}}
\newcommand{\h}{{\mathfrak h}}
\newcommand{\V}{{\mathbf V}}
\begin{document}

\title[Very standard homogeneous Finsler manifolds with positive flag curvature]{Very standard homogeneous Finsler manifolds with positive flag curvature}
\author{Xiyun Xu}
\address[Xiyun Xu] {School of Mathematical Sciences,
Capital Normal University,
Beijing 100048,
P.R. China}
\email{2210502058@cnu.edu.cn}
\author{Ming Xu}
\address[Ming Xu] {Corresponding author, School of Mathematical Sciences,
Capital Normal University,
Beijing 100048,
P.R. China}
\email{mgmgmgxu@163.com}

\begin{abstract}
In this paper, we consider a homogeneous manifold $G/H$ in which $G$ is
a compact connected simply connected simple Lie group and $H$ is a closed connected subgroup of $G$. We define standard and very standard homogeneous Finsler metrics on $G/H$, which generalize the standard
homogeneous $(\alpha_1,\alpha_2)$ metric in literature. We classify all
these $G/H$ which admit positively curved very standard homogeneous Finsler
metrics.

\noindent
\textbf{Mathematics Subject Classification (2010)}: 22E46, 53C30.
\vbox{}
\\
\textbf{Key words}: flag curvature; homogeneous manifold; Finsler metric; positively curved; very standard homogeneous metric;
\end{abstract}

\maketitle

\section{Introduction}

Homogeneous Riemannian manifolds which have positive sectional curvature have been classified during the nineteen sixties and seventies \cite{AW1975}\cite{Be1961}\cite{Be1976}\cite{Wa1972} (see
\cite{Wi1990}\cite{WZ2018}\cite{XW2015} for some refinement). It suggests us explore a similar project in Finsler geometry:
{\it Classifying smooth coset spaces $G/H$ which admits homogeneous Finsler metrics with positive flag curvature}.

In recent years, there have been many progresses in classifying positively curved homogeneous Finsler manifolds  \cite{XD2017}\cite{XD2017-2}\cite{XDHH2015}\cite{XZ2017} (a survey for these works can be found in \cite{DX2017}). It should be notified that, the classification is incomplete when the dimension is odd, especially, for non-reversible metrics. In this paper, we study this classification, which is restricted to {\it very standard homogeneous
Finsler metrics}.

A (very) standard homogeneous Finsler metric is defined in Section \ref{subsection-very-standard}. It generalizes the standard homogeneous $(\alpha_1,\alpha_2)$ metric in \cite{ZX2022}. Notice that a standard homogeneous $(\alpha_1,\alpha_2)$ metric must be reversible, but a general (very) standard homogeneous Finsler metric may not. A very standard homogeneous metric may be viewed as a
deformation for the
Riemannian normal homogeity, which preserves the most
computability. In Riemannian geometry, it has been extensively applied and explored. In Finsler geometry, it differs significantly to the normal homogeneity, because the latter one preserves instead some other features, for example, vanishing S-curvature, non-negative flag curvature, and geodesic orbit properties \cite{XD2017}.

The main theorem of this paper is as follows.

\begin{theorem}\label{main-thm}
Let $G/H$ be a smooth coset space in which $G$ is a compact connected simply connected simple Lie group, and $H$ is a closed connected
subgroup. Then $G/H$ admits very standard homogeneous Finsler metrics with
positive flag curvature if and only if it admits positively curved homogeneous Riemannian metrics.
\end{theorem}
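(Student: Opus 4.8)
The plan is to prove the two implications separately. For the ``if'' direction, suppose $G/H$ carries a positively curved homogeneous Riemannian metric. By the classification of Berger, Wallach and Aloff--Wallach \cite{AW1975}\cite{Be1961}\cite{Wa1972} (with the simplicity of $G$ taken into account), $G/H$ belongs to an explicit finite list, and on each space in it the positively curved invariant Riemannian metric can be written, with respect to a suitable chain of intermediate connected subgroups $H=K_0\subsetneq K_1\subsetneq\cdots\subsetneq K_r=G$ and the induced $\Ad(H)$-invariant decomposition $\m=\m_1\oplus\cdots\oplus\m_r$, as a positive linear combination $\sum_i x_i B|_{\m_i}$ of the restrictions of $B=-\kappa$, where $\kappa$ is the Killing form. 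By the definition in Section \ref{subsection-very-standard}, such a Riemannian metric is very standard; hence $G/H$ admits a very standard homogeneous Finsler metric with positive flag curvature.

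For the ``only if'' direction, let $F$ be a very standard homogeneous Finsler metric on $G/H$ with positive flag curvature, with underlying data $H=K_0\subsetneq\cdots\subsetneq K_r=G$, $\m=\m_1\oplus\cdots\oplus\m_r$ and parameters $x_i>0$. First I would invoke the rank obstruction for positively curved homogeneous Finsler spaces (the Finslerian analogue of the Wallach--B\'erard-Bergery theorem, cf. \cite{XDHH2015}): $\rk G-\rk H\le 1$, with equality forced to $\rk G=\rk H$ when $\dim G/H$ is even. Combined with the structure theory of closed connected subgroups of a compact simple Lie group, this reduces $(\g,\h)$ to a finite explicit list. Next I would use that for each $i$ the orbit $K_i\cdot eH=K_i/H$ is a totally geodesic submanifold of $(G/H,F)$ --- the Finslerian version of the classical fact that suborbits of a metric adapted to a bi-invariant form along a subgroup chain are totally geodesic --- and that the induced metric is again very standard, for the truncated chain. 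By the Finsler Gauss equation, a flag tangent to $K_i/H$ has the same flag curvature in $K_i/H$ as in $G/H$; hence each $(K_i/H,F|_{K_i/H})$ is positively curved. For $i=1$ this is a normal, respectively standard $(\alpha_1,\alpha_2)$, homogeneous metric on $K_1/H$, so \cite{ZX2022} together with the classification of normal homogeneous positively curved Finsler spaces \cite{XD2017} applies; iterating up the chain pins down the admissible configurations.

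Finally I would go through the finite list produced by the rank obstruction. For the spaces already appearing on Berger's list there is nothing to prove. For every remaining candidate $(\g,\h)$ the claim is that no choice of chain, parameters $x_i$, and shape function yields positive flag curvature: one exhibits a flag $(u,\V)$ with $u\in\V\subset\m_i$ a $2$-plane inside a single irreducible isotropy summand, on which $F$ restricts to a multiple of $B$, so that by homogeneity and the totally geodesic suborbits the flag curvature of $F$ at $(u,\V)$ reduces to a Riemannian flag curvature of a diagonal invariant metric, which one checks is non-positive exactly as in the Riemannian classification. The main obstacle is precisely this last case-by-case step: one must produce a zero- or negative-curvature flag \emph{uniformly} in all the free parameters and in the shape function, and, since a very standard metric need not be reversible, one must simultaneously control flags pointing in both directions; absorbing the extra flexibility of the non-quadratic shape function --- absent in the Riemannian setting --- is what makes the argument more delicate than Wallach's original analysis.
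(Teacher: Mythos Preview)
Your proposal rests on a misreading of the definition of ``very standard''. In Section~\ref{subsection-very-standard} the decomposition $\m=\m_1+\cdots+\m_s$ collects irreducible $\Ad(H)$-summands by isomorphism type; there is no chain $H=K_0\subsetneq\cdots\subsetneq K_r=G$, no parameters $x_i>0$, and the metric is a genuine Finsler norm required only to be $SO(\m_1)\times\cdots\times SO(\m_s)$-invariant, not a diagonal Riemannian metric $\sum_i x_i\,B|_{\m_i}$. Consequently the totally-geodesic-suborbit machinery has nothing to attach to: the intermediate groups $K_i$ simply do not exist in the data. And even for a flag lying inside a single $\m_i$, the homogeneous curvature formula involves $[\cdot,\m]_\m$ and $g_y$ on all of $\m$, so it does not automatically reduce to the curvature of a Riemannian diagonal metric; your final case-by-case step therefore has no mechanism behind it.

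The paper's route is different. The ``if'' direction is as you indicate (Example~\ref{example-1}). For ``only if'' the argument splits by parity and reversibility. Even dimension is covered directly by \cite{XDHH2015}. In odd dimension with $F$ reversible, one computes from the presentation (\ref{001}) that at any $y\in\m_i\setminus\{0\}$ the fundamental tensor satisfies $g_y(u,v)=0$ whenever $u\in\m_j$ and $\langle u,v\rangle=0$ (Lemma~\ref{lemma-2}); combined with Lemma~\ref{lemma-1} this yields $K(o,y,y\wedge v)=0$ for any commuting pair $y\in\m_i$, $v\in\m_j$ (Lemma~\ref{lemma-3}). The classification in \cite{XZ2017} leaves only five undetermined candidates, and each is eliminated by exhibiting such a commuting pair in root-space coordinates. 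In odd dimension with $F$ non-reversible, some summand $\m_s$ is one-dimensional and equals $\mathfrak{c}_\g(\h)\cap\m$; the paper then constructs a Finsler submersion $(G/H,F)\to(G/H',F')$ onto the even-dimensional quotient by the connected normalizer $H'$ of $H$, so that $(G/H',F')$ is reversible, very standard, and positively curved by \cite{AD2001}. Applying \cite{XDHH2015} to $G/H'$ determines $G/H$ up to the exceptional case $SU(3)/S^1_{k,-k}$, which is ruled out by the analogue of Lemma~\ref{lemma-3} at $y\in\m_s$ (Lemma~\ref{lemma-6}).
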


Theorem \ref{main-thm} provides the following list of positively curved very standard homogeneous Finsler manifolds:
\begin{enumerate}
\item the compact rank one symmetric spaces, i.e. $S^n=Spin(n+1)/Spin(n)$, $\mathbb{C}\mathrm{P}^{n-1}=SU(n)/S(U(n-1)U(1))$, $\mathbb{H}\mathrm{P}^{n-1}=Sp(n)/Sp(n-1)Sp(1)$, with the integer $n>1$, and the Caylay plane $F_4/Spin(9)$;
\item the homogeneous spheres $S^{2n+1}=SU(n+1)/SU(n)$ and $S^{4n-1}=Sp(n)/Sp(n-1)$ with the integer $n>0$,  $S^6=G_2/SU(3)$, $S^7=Spin(7)/G_2$ and $S^{15}=Spin(9)/Spin(7)$, and the homogeneous complex projective spaces $\mathbb{C}\mathrm{P}^{2n-1}=Sp(n)/Sp(n-1)U(1)$ with the integer $n>0$;
\item the Berger spaces $Sp(2)/SU(2)$ and $SU(5)/Sp(2)U(1)$;
\item the Wallach spaces $SU(3)/U(1)^2$, $Sp(3)/Sp(1)^3$ and $F_4/Spin(8)$;
\item the Aloff Wallach spaces $SU(3)/S^1_{k,l}$ with $k,l\in\mathbb{Z}$ satisfying $kl(k+l)\neq0$, in which $S^1_{k,l}=\{\mathrm{diag}(z^k,z^l,z^{-k-l}),\forall z\in\mathbb{C},|z|=1\}$.
\end{enumerate}
This list is complete, and essentially the same as those in \cite{DX2017}\cite{WZ2018}, because each connected positively curved homogeneous manifold admits the transitive isometric action of
a compact connected simply connected simple Lie group.
Theorem \ref{main-thm} provides
another generalization of Berger's classification for positively curved normal homogeneous Riemannian manifolds \cite{Be1961}, besides \cite{XD2017}.

The theme for proving Theorem \ref{main-thm} is as follows. Besides the normal homogeneous Riemannian metrics in \cite{Be1961}, the positively curved homogeneous metrics constructed in \cite{AW1975}\cite{Wa1972} are in deed very standard. This observation proves one side of the theorem. To prove the other side, we divide the discussion into three cases, the even dimensional case, the reversible odd dimensional case,
and the non-reversible odd dimensional case. The even dimensional case follows immediately after the classification in \cite{XDHH2015}.
The reversible odd dimensional case is discussed in Section \ref{sub-section-the-reversible case}, where Theorem 1.1 in \cite{XZ2017} is used. When we verify case by case the five undetermined candidates in that theorem can not be positively curved, the very standard property brings us more orthogonality, so that the homogeneous curvature formula (see Theorem \ref{theorem 4.1}, or Theorem 4.1 in \cite{XDHH2015}) can be more conveniently applied.
The non-reversible odd dimensional case is discussed in Section \ref{sub-section-non-reversible}. We can find
a Finsler submersion $G/H\rightarrow G/K$ with $\dim G/K=\dim G/H -1$. From the classification of the even dimensional positively curved $G/K$, we can determine the corresponding $G/H$.

\section{Preliminaries}
In this section, we recall some knowledge in general and homogeneous Finsler geometry, which are necessary for later discussion. Most notions can be found in \cite{BCS2000} and \cite{De2012} respectively. In the last subsection, we define standard and very standard homogeneous Finsler metrics.

\subsection{Minkowski norm and Finsler metric}

A {\it Minkowski norm} on a real vector space $\mathbf{V}$ with $\dim\mathbf{V}=n$ is a continuous function $F: \V\rightarrow [0,+\infty)$ satisfying the following conditions:
\begin{enumerate}
\item $F$ is positive and smooth when restricted to $\V\setminus \{0\}$;
\item $F$ is positively homogeneous of degree one, i.e. $F(\lambda u)=\lambda F(u)$, for all $u\in\V$ and $\lambda>0$.
\item $F$ is strongly convex. Namely, choose any basis $\{e_1,e_2,\cdots,e_n\}$ of $\V$ and write $F(y)=F(y^1,y^2,\cdots,y^n)$ for $y=y^ie_i\in\V$, then the Hessian matrix
$(g_{ij}(y))=\left(\left[\frac{1}{2}F^2\right]_{y^iy^j}\right)$
is positive definite when $y\neq 0$.
\end{enumerate}
The Hessian matrix in (3) provides the {\it fundamental tensor}
$$g_y(u,v)=g_{ij}(y)u^iv^j=\frac12\frac{\partial^2}{\partial s\partial t}|_{s=t=0}F^2(y+tu+sv),\quad\forall u=u^ie_i, v=v^je_j\in\V,$$
which is an inner product on $\V$, parametrized by $y\in\V\backslash\{0\}$.
We call $F$ {\it reversible} if $F(y)=F(-y)$ is valid everywhere.

A {\it Finsler metric} on a smooth manifold $M$ is a continuous function $F: TM \rightarrow [0, \infty)$ such that
\begin{enumerate}
\item $F$ is smooth on the slit tangent bundle $TM\setminus0$;
\item The restriction $F(x,\cdot)$ of $F$ to each tangent space $T_xM$ is a Minkowski norm.
\end{enumerate}
When the manifold $M$ is endowed with a Finsler metric, we call $(M,F)$ a {\it Finsler manifold} or a {\it Finsler space}. The fundamental tensor and reversibility
of a Finsler metric are defined through each $F(x,\cdot)$.

\subsection{Riemann curvature and flag curvature}

For a Finsler manifold, the {\it Riemann curvature} $R_y:T_xM\rightarrow T_xM$ for $y\in T_xM\backslash\{0\}$ naturally appears in the Jacobi equation
for a variation of constant speed geodesics. In local coordiates, it can be presented as $R_y=R_k^i(y)\partial_{x^i}\otimes dx^{k}:T_{x}M\to T_{x}M$, in which
$$R_{k}^{i}(y)=2\partial_{x^{k}}G^{i}-y^{j}
\partial_{x^{j}y^{k}}^{2}G^{i}+2G^{j}\partial_{y^{j}y^{k}}^{2}G^{i}-\partial_{y^{j}}G^{i}\partial_{y^{k}}G^{j},$$
Here $G^i=\frac14g^{il}([F^2]_{x^ky^l}y^k-[F^2]_{x^l})$ is the geodesic spray coefficient.

Sectional curvature in Riemannian geometry can be generalized to {\it flag curvature} as follows. Let $y$ be a nonzero tangent vector in $T_xM$ and $\textbf{P}\subset T_xM$ a tangent plane containing $y$. Assuming $\mathbf{P}=\mathrm{span}\{y,v\}$, we call the triple $(x,y,\mathbf{P})=(x,y,y\wedge v)$ a {\it flag} of $M$, and define its {flag curvature} by
\begin{equation*}
K(x,y,\mathbf{P})=K(x,y,y\wedge v)=\frac{g_y(R_{y}v,v)}{g_y (y,y) g_y(v,v)-g_y(y,v)^{2}}.
\end{equation*}


\subsection{Homogeneous Finsler manifold and homogeneous curvature formula}
Let $(M,F)$ be a connected Finsler manifold. We denote by $I(M,F)$ its isometry group and by $I_0(M,F)$ the identity component of $I(M,F)$. We call $(M,F)$ {\it homogeneous} if $I_0(M,F)$ acts transitively on $M$. Since $I_0(M,F)$ is a Lie transformation group, we may present $M$ as a smooth manifold $G/H$, in which $G$ is a closed connected Lie subgroup of $I_0(M,F)$, and then $H$ is a compact subgroup of $G$.
In the Lie algebraic level, we have
a {\it reductive decomposition} for $G/H$, i.e., an $\mathrm{Ad}(H)$-invariant linear decomposition $\mathfrak{g}=\mathfrak{h}+\mathfrak{m}$, in which $\g=\mathrm{Lie}(G)$ and
$\h=\mathrm{Lie}(H)$.
The subspace $\mathfrak{m}$ can be canonically identified with the tangent space $T_o(G/H)$ at $o=eH$, such that the $\mathrm{Ad}(H)$-action on $\mathfrak{m}$ coincides with the isotropic $H$-action on $T_o(G/H)$.

To construct a homogeneous Finsler metric, we may directly start with a
homogeneous manifold $G/H$ with a compact $H$ and a reductive decomposition
$\g=\h+\m$. Given any $\mathrm{Ad}(H)$-invariant Minkowski norm $F$ on $\m=T_o(G/H)$,
we may use the $G$-actions to transport it to other points and get a homogeneous Finsler
metric on $G/H$, which is still denoted by $F$ for simplicity. This correspondence is one-to-one.

All curvatures of a homogeneous Finsler manifold $(G/H,F)$ can be described
explicitly by the Minkowski norm $F=F(o,\cdot)$ and the algebraic information
in the given reductive decomposition \cite{Hu2015}\cite{Xu2022}. For example, we have (see Theorem 4.1 in \cite{XDHH2015})

\begin{theorem}\label{theorem 4.1}
Let $(G/H, F)$ be a homogeneous Finsler manifold with a given reductive decomposition $\g = \h+\m$, $u$ and $v$ a linearly independent commuting pair in $\m$ satisfying $g_y(y,[y,\m]_\m)=0$.  we have
\begin{equation}\label{homogeneous flag curvature formula}
K(o,y,y\wedge v)=\frac{g_y( U(y,v),U(y,v))}{g_y (y,y) g_y( v,v)-
g_y(y,v)^2},
\end{equation}
where $U(y, v)\in\m$ is determined by
\begin{center}
$g_y(U(y,v),w)=\frac{1}{2}(g_y([w,y]_{\m},v)+g_y([w,v]_{\m},y))$, for any $w\in\m$.
\end{center}
Here the subscript $\m$ means is the linear projection to $\m$, with respect to the given reductive decomposition.
\end{theorem}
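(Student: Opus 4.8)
The plan is to compute $g_y(R_yv,v)$ geometrically along the orbit curve $\gamma(t)=\exp(ty)\cdot o$ by means of a canonical Jacobi field, rather than by expanding the local-coordinate expression for $R^i_k$. First I would note that the hypothesis $g_y(y,[y,\m]_\m)=0$ is precisely the homogeneous geodesic criterion (Latifi's lemma): it is equivalent to $\gamma(t)=\exp(ty)\cdot o$ being a constant-speed geodesic of $(G/H,F)$ with $\dot\gamma(0)=y$, so that $D_{\dot\gamma}\dot\gamma\equiv0$, where $D$ denotes the covariant derivative along $\gamma$ for the Chern connection with reference vector $\dot\gamma$. Next, since $G$ acts by isometries, each $\xi\in\g$ gives a Killing field $\xi^*$ of $(G/H,F)$, and the restriction of a Killing field to a geodesic is a Jacobi field; hence $J(t):=v^*(\gamma(t))$ is a Jacobi field along $\gamma$ with $J(0)=v^*(o)=v$, so the Jacobi equation gives $R_yv=-D_{\dot\gamma}D_{\dot\gamma}J|_{t=0}$. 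Because $y$ and $v$ commute, the flows of $y$ and $v$ commute, so $J(t)=\mathrm{d}L_{\exp(ty)}(v)$ and $\dot\gamma(t)=\mathrm{d}L_{\exp(ty)}(y)$; invariance of the fundamental tensor under the isometry $L_{\exp(ty)}$ then gives $g_{\dot\gamma(t)}(J(t),J(t))\equiv g_y(v,v)$ for all $t$.

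The computational core is to show that $D_{\dot\gamma}J|_{t=0}=U(y,v)$. This is the Finsler counterpart of Nomizu's formula for the Levi--Civita connection of an invariant Riemannian metric: writing out the Chern connection of $F$ along the homogeneous geodesic $\gamma$ in terms of the bracket $[\cdot,\cdot]_\m$ and the fundamental tensor $g_y$ (equivalently, in terms of L.\ Huang's connection operator), one finds $D_{\dot\gamma}J|_0=U(y,v)+\tfrac12[y,v]_\m$ together with Cartan-tensor correction terms that are linear in the spray vector $\eta(y)\in\m$ defined by $g_y(\eta(y),w)=g_y(y,[w,y]_\m)$; but $g_y(y,[y,\m]_\m)=0$ says exactly $\eta(y)=0$, so those corrections vanish, and $[y,v]=0$ kills the remaining bracket term, leaving $U(y,v)$.

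Finally I would assemble the pieces. By metric compatibility of the Chern connection along $\gamma$, and since $D_{\dot\gamma}\dot\gamma=0$ so that the Cartan term in the compatibility identity drops out,
\[
\tfrac{d^2}{dt^2}\,g_{\dot\gamma}(J,J)=2\,g_{\dot\gamma}(D_{\dot\gamma}D_{\dot\gamma}J,J)+2\,g_{\dot\gamma}(D_{\dot\gamma}J,D_{\dot\gamma}J);
\]
the left-hand side vanishes because $g_{\dot\gamma}(J,J)$ is constant, so at $t=0$,
\[
g_y(R_yv,v)=-g_y(D_{\dot\gamma}D_{\dot\gamma}J,J)\big|_0=g_y(D_{\dot\gamma}J,D_{\dot\gamma}J)\big|_0=g_y(U(y,v),U(y,v)),
\]
and dividing by $g_y(y,y)g_y(v,v)-g_y(y,v)^2$ (positive, since $y$ and $v$ are linearly independent) yields \eqref{homogeneous flag curvature formula}. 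The main obstacle is the second step: establishing the homogeneous covariant-derivative formula and, in particular, checking that the geodesic condition is exactly what annihilates the Finsler-specific Cartan corrections, so that the connection along $\gamma$ collapses to the Riemannian-type expression. A more computational alternative is to substitute the homogeneous form of the spray coefficients $G^i$ into the displayed formula for $R^i_k(y)$ and then contract with $g_y(\,\cdot\,,v)$, but the same Cartan-term bookkeeping reappears there in coordinate guise.
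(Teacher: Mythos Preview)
The paper does not prove this theorem: it is quoted, with attribution, as Theorem~4.1 of \cite{XDHH2015}, so there is no in-paper argument to compare your proposal against.

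Your Jacobi-field route is nonetheless a sound strategy. The identification of $g_y(y,[y,\m]_\m)=0$ with Latifi's homogeneous-geodesic criterion (equivalently, vanishing of the spray vector $\eta(y)$), the fact that the Killing field $v^*$ restricts to a Jacobi field along $\gamma(t)=\exp(ty)\cdot o$, and the constancy of $g_{\dot\gamma}(J,J)$ from $[y,v]=0$ together with $G$-invariance are all correct. The step you rightly flag as the crux is the Finsler Nomizu-type identity $D_{\dot\gamma}J|_{0}=U(y,v)$: in Huang's connection-operator formalism this amounts to checking that every Cartan-tensor correction in the covariant derivative along $\gamma$ is contracted against $\eta(y)$ and hence vanishes under the hypothesis, after which $[y,v]=0$ kills the remaining $\tfrac12[y,v]_\m$ term. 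Once that is in hand, your metric-compatibility computation (Cartan term absent because $D_{\dot\gamma}\dot\gamma=0$) yields $g_y(R_yv,v)=g_y(U(y,v),U(y,v))$ as written.

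The proof in \cite{XDHH2015} instead substitutes Huang's homogeneous spray and connection formulae directly into the Riemann-curvature expression and simplifies under $\eta(y)=0$ and $[y,v]=0$. Your variational argument reaches the same endpoint with more geometric transparency and less index bookkeeping; its cost is that the covariant-derivative identity must be justified carefully, which is precisely the computation the cited proof carries out in coordinates.
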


In this paper, we will use the following immediate corollary of Theorem
\ref{theorem 4.1}.

\begin{lemma}\label{lemma-1}
Keeping all assumptions and notations in Theorem \ref{theorem 4.1},
then $K(o,u,u\wedge v)=0$ when the linearly independent pair $u,v\in\m$ satisfy $$[u,v]=0\mbox{ and }g_u(u,[u,\mathfrak{m}]_\m)=g_u(u,[v,\mathfrak{m}]_\m)=g_u(v,[u,\m]_\m)=0.$$
\end{lemma}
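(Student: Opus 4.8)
The plan is to derive Lemma \ref{lemma-1} directly from Theorem \ref{theorem 4.1} by showing that, under the stated hypotheses, the vector $U(u,v)$ vanishes. Since $u$ and $v$ are linearly independent and commuting, the pair qualifies for the curvature formula \eqref{homogeneous flag curvature formula} provided we check the side condition $g_u(u,[u,\m]_\m)=0$, which is explicitly assumed. Then $K(o,u,u\wedge v)$ equals $g_u(U(u,v),U(u,v))$ divided by the (positive) denominator, so it suffices to prove $U(u,v)=0$, i.e. $g_u(U(u,v),w)=0$ for every $w\in\m$.

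From the defining relation in Theorem \ref{theorem 4.1} with $y=u$, we have
\begin{equation*}
g_u(U(u,v),w)=\tfrac12\bigl(g_u([w,u]_\m,v)+g_u([w,v]_\m,u)\bigr),\qquad\forall w\in\m.
\end{equation*}
The key observation is that the right-hand side can be rewritten using the elementary symmetry $g_u([w,u]_\m,v)=-g_u([u,w]_\m,v)$ (since $[w,u]=-[u,w]$ and the projection to $\m$ is linear), and similarly for the second term. Now I would use a standard fact about the fundamental tensor: because $g_u$ is just an inner product on $\m$, the bilinear form $(a,b)\mapsto g_u([a,b]_\m,c)$ exhibited against a fixed third vector is, in general, not skew in the expected way — but what saves us is precisely the three orthogonality conditions. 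Concretely, I would argue that the two terms $g_u([w,u]_\m,v)$ and $g_u([w,v]_\m,u)$ each vanish after suitably decomposing $w$ or, more cleanly, by viewing them as pairings that land in $g_u(v,[u,\m]_\m)$ and $g_u(u,[v,\m]_\m)$ respectively: indeed $g_u([w,u]_\m,v)=\pm g_u(v,[u,w]_\m)\in g_u(v,[u,\m]_\m)=0$ and $g_u([w,v]_\m,u)=\pm g_u(u,[v,w]_\m)\in g_u(u,[v,\m]_\m)=0$. Hence $g_u(U(u,v),w)=0$ for all $w\in\m$, so $U(u,v)=0$ and therefore $K(o,u,u\wedge v)=0$.

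The main thing to be careful about — the only real obstacle — is matching the bracket slots precisely with the hypotheses. The defining identity for $U$ has $[w,y]_\m$ and $[w,v]_\m$, i.e. the variable $w$ sits in the first slot; the hypotheses $g_u(v,[u,\m]_\m)=0$ and $g_u(u,[v,\m]_\m)=0$ have $u$ and $v$ in the first slot. Translating one into the other is exactly a sign flip via antisymmetry of the Lie bracket together with $\mathrm{Ad}(H)$-invariance not even being needed here — only linearity of the projection $(\cdot)_\m$ and symmetry of the inner product $g_u$. The extra hypothesis $g_u(u,[u,\m]_\m)=0$ is used solely to license the application of Theorem \ref{theorem 4.1}; it plays no role in forcing $U(u,v)=0$. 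Once these bookkeeping points are settled the proof is a two-line computation, which is why the statement is correctly billed as an immediate corollary.
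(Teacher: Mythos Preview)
Your proposal is correct and is exactly the argument the paper has in mind: the lemma is stated there without proof, merely as ``the following immediate corollary of Theorem~\ref{theorem 4.1},'' and your two-line computation showing $U(u,v)=0$ via the antisymmetry of the bracket and the hypotheses $g_u(v,[u,\m]_\m)=g_u(u,[v,\m]_\m)=0$ is the intended derivation.
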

\subsection{Very standard homogeneous Finsler metric}
\label{subsection-very-standard}
Throughout this paper, we will only discuss the homogeneous manifold $G/H$, in which $G$ is compact connected simply connected simple Lie group, and $H$ is a connected closed subgroup in $G$.

Denote by $\langle\cdot,\cdot\rangle$ a bi-invariant inner product on $\mathfrak{g}$, which is a suitable negative scalar of the Killing form.
With respect to $\langle\cdot,\cdot\rangle $,
$G/H$ has an orthogonal reductive decomposition $\g=\h+\m$. Further more, $\mathfrak{m}$ can be linearly decomposed as $\mathfrak{m}=\mathfrak{m}_1+\cdots+\mathfrak{m}_s$,
which is invariant with respect to the $\mathrm{Ad}(H)$-actions, and orthogonal with respect to $\langle\cdot,\cdot\rangle $. Then we have a standard block diagonal $SO(\m_1)\times\cdots\times SO(\m_s)$-action on $\m$.

We call a homogeneous Finsler metric $F$ on $G/H$ {\it standard} if
$\m$ has a decomposition as previously described, such that
the Minkowski norm $F=F(o,\cdot)$ on $\mathfrak{m}$ is invariant for
the corresponding $SO(\m_1)\times\cdots\times SO(\m_s)$-action. More over,
if in each $\m_i$, only one equivalent class of irreducible sub-$H$-presentations appear, and the irreducible sub-$H$-representations in $\m_i$ are
not equivalent to those in $\m_j$, whenever $i\neq j$, we call $F$
{\it very standard}.

\begin{remark}
The notion of standard homogeneous Finsler metric generalizes that of homogeneous $(\alpha_1,\alpha_2)$ metric in \cite{ZX2022}.
Generally speaking, $G/H$ may have different
decompositions $\m=\m_1+\cdots+\m_s$, and correspondingly, it has different families of standard homogeneous Finsler metrics. However,
the decomposition $\m=\m_1+\cdots+m_s$ for
a very standard homogeneous metric is unique. This decomposition is obviously $\langle\cdot,\cdot\rangle $-orthogonal by Schur Lemma.
\end{remark}

To present a very standard homogeneous metric $F$,
we borrow notations from $(\alpha,\beta)$ metrics and $(\alpha_1,\alpha_2)$.
For example, when $\dim\m_i\geq 2$ for each $i$, we may present
$F$ as
\begin{equation}\label{001}
F(y)=\sqrt{L(\langle y_1,y_1\rangle ,\cdots,\langle y_s,y_s\rangle )},
\end{equation}
in which $y=y_1+\cdots+y_s$, with $y_i\in\m_i$ for each $i$ (same below).
Obviously, $F$ is reversible in this case.
When $\dim \m_i>1$ for each $i<s$ and $\dim \m_s=1$, we may present
$F$ as
\begin{equation}\label{002}
F(y)=\sqrt{L(\langle y_1,y_1\rangle  ,\cdots,\langle y_{s-1},y_{s-1}\rangle ,y_s)}.
\end{equation}
Since the the trivial $SO(\m_s)$ does not impose any symmetry requirement,
The metric in (\ref{002}) may be non-reversible in this case.

We end this section by some examples.

\begin{example}\label{example-1}
A normal homogeneous Riemannain metric on $G/H$ is very standard.
On the three Wallach spaces,
$SU(3)/U(1)^2$, $Sp(3)/Sp(1)^3$ and $F_4/Spin(8)$, all homogeneous Riemannian metric are very standard. On the Aloff Wallach space $SU(3)/S^1_{k,l}$, the positively curved homogeneous Riemannian metrics constructed in \cite{AW1975} are very standard.
\end{example}

\begin{example}
When
$G/H$ has two non-equivalent irreducible isotropic summands \cite{CN2019}\cite{Ta1999},
any homogeneous $(\alpha_1,\alpha_2)$ metric on $G/H$ is very standard.
\end{example}

\label{subsection-2-4}

\section{Proof of Theorem \ref{main-thm}}

\subsection{Theme of the proof}

All homogeneous manifolds $G/H$ admitting positively curved
homogeneous Riemannian metrics are listed in Section 1. Those manifolds
can be sorted into two categories. On the three Wallach spaces and the Aloff Wallach spaces, positively curved homogeneous Riemannian metrics are constructed in \cite{Wa1972} and \cite{AW1975} respectively. As pointed out in Example \ref{example-1}, they are very standard. On the other manifolds, the normal homogeneous Riemannian metrics are positively curved and very standard. This proves Theorem \ref{main-thm} in one direction.

To
prove Theorem \ref{main-thm} in the other direction, we just need to show that $G/H$ belongs to the list in Section 1.
When $\dim G/H$ is even, it follows
after Theorem 1.3 in \cite{XDHH2015}.

Now suppose that $(G/H,F)$ is an odd dimensional positively curved very standard homogeneous Finsler manifold, and let $\m=\m_1+\cdots+\m_s$ be the corresponding decomposition.
We choose a Cartan subalgebra $\mathfrak{t}$ of $\mathfrak{g}$ such that
$\mathfrak{t}\cap\mathfrak{h}$ is a Cartan subalgebra of $\mathfrak{h}$.
Then the rank inequality $\mathrm{rk}\g\leq\mathrm{rk}\h+1$ (see Theorem
5.2 in \cite{XDHH2015}) implies that $\dim\mathfrak{t}\cap\mathfrak{m}=1$.

Using the given bi-invariant inner product, the roots of $\g$ are viewed as vectors in $\mathfrak{t}$. We will keep this convention throughout this section.
For example, the root system of $\g=sp(n)$
is $$\{\pm e_i\pm e_j,\forall 1\leq i<j\leq n;\pm 2e_i,\forall 1\leq i\leq n \}\subset\mathfrak{t},$$
and the root system of $\g=su(n+1)$ is
$$\{\pm(e_i-e_j),\forall 1\leq i<j\leq n+1\}\subset\mathfrak{t},$$ where $\{e_1,\cdots,e_n\}$ is an orthonormal basis.

The proof is divided into two case, that $F$ is reversible and
that $F$ is non-reversible.

\subsection{The reversible case}
\label{sub-section-the-reversible case}

In this subsection, we further assume $F$ is reversible.
By Corollary 1.2 in \cite{XZ2017}, we only need to verify that $G/H$ can not be one of the following:
\begin{enumerate}
\item $Sp(2)/\mathrm{diag}(z,z^3)\mbox{ with }z\in\mathbb{C}\mbox{ and }|z|=1$;
\item $Sp(2)/\mathrm{diag}(z,z)\mbox{ with }z\in\mathbb{C}\mbox{ and }|z|=1$;
\item $Sp(3)/\mathrm{diag}(z,z,q)\mbox{ with }z\in\mathbb{C}, q\in\mathbb{H}\mbox{ and }|q|=|z|=1$;
\item $SU(4)/\mathrm{diag}(zA,z,\overline{z}^3)\mbox{ with }z\in\mathbb{C},|z|=1\mbox{ and }A\in SU(2)$;
\item $G_2/SU(2)$ with $SU(2)$ the normal subgroup of $SO(4)$ corresponding to the long root.
\end{enumerate}

\begin{lemma}\label{lemma-2}
Let $F$ be a very standard reversible homogeneous Finsler metric on $G/H$,
corresponding to the decomposition $\m=\m_1+\cdots+\m_s$. Then for any $y\in\m_i\backslash\{0\}$, we have
$g_y(u,v)=0$ whenever $u\in\m_j$ for some $j$ and $\langle u,v\rangle =0$.
\end{lemma}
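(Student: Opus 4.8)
The plan is to put $F$ into its explicit normal form and then to read the statement off a direct computation of the fundamental tensor. Since $F$ is very standard and reversible, it has the shape (\ref{001}): there is a function $L$ with $F(y)^2=L(r_1,\ldots,r_s)$, where $y=y_1+\cdots+y_s$ with $y_k\in\m_k$ and $r_k=\langle y_k,y_k\rangle$. (The $SO(\m_k)$-invariance coming from standardness makes $F^2$ depend on the summands of dimension $\geq 2$ only through the $r_k$, and reversibility supplies the reflection symmetry in the --- at most one --- one-dimensional summand.) Moreover $L$ is smooth on the positive orthant minus the origin, smoothness up to the faces $r_k=0$ being the statement that a smooth function even in a real variable is a smooth function of its square.

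The single genuine computation is this: writing $u=\sum_k u_k$ and $v=\sum_k v_k$ with $u_k,v_k\in\m_k$, the chain rule applied to $g_y(u,v)=\tfrac12\,\partial_s\partial_t\big|_{s=t=0}F^2(y+tu+sv)$ yields
\[
g_y(u,v)=2\sum_{k,l}L_{kl}\,\langle y_k,u_k\rangle\langle y_l,v_l\rangle+\sum_k L_k\,\langle u_k,v_k\rangle ,
\]
with $L_k=\partial_{r_k}L$ and $L_{kl}=\partial^2_{r_k r_l}L$ evaluated at the tuple $(r_1,\ldots,r_s)$ determined by $y$. I will not expand this further.

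The step that does the work is that the degree-one homogeneity of $F$ forces $L$ to be homogeneous of degree one in $(r_1,\ldots,r_s)$, so that its second derivative along each coordinate axis vanishes; in particular $L_{ii}$ is zero at every point of the form $(0,\ldots,r_i,\ldots,0)$. Now let $y\in\m_i\setminus\{0\}$. Then $y_k=0$ for $k\ne i$, so the tuple determined by $y$ is $(0,\ldots,r_i,\ldots,0)$ with $r_i=\langle y,y\rangle>0$, and in the formula above only the term $k=l=i$ survives in the double sum; it equals $2L_{ii}\langle y,u_i\rangle\langle y,v_i\rangle=0$. Hence $g_y(u,v)=\sum_k L_k\langle u_k,v_k\rangle$.

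To finish, impose $u\in\m_j$: then $u_k=0$ for $k\ne j$, so this collapses to $L_j\langle u,v_j\rangle$. Since the decomposition $\m=\m_1+\cdots+\m_s$ is $\langle\cdot,\cdot\rangle$-orthogonal and $u\in\m_j$, we have $\langle u,v_j\rangle=\langle u,v\rangle=0$ by hypothesis, whence $g_y(u,v)=0$. The one place deserving a careful word rather than a formula is the smoothness of $L$ across the faces of the orthant, needed to make sense of $L_{ii}$ and $L_k$ at the axis points --- handled, as noted, by the even-function remark.
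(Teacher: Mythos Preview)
Your proof is correct and follows essentially the same route as the paper: write $F^2=L(r_1,\ldots,r_s)$, expand the fundamental tensor by the chain rule into the same two-sum formula, and then use the degree-one homogeneity of $L$ to kill $L_{ii}$ at the axis point determined by $y\in\m_i$. The paper treats the cases $j=i$ and $j\neq i$ separately (and derives $L_{ii}=0$ via Euler's relation $\sum_{p,q}r_pr_qL_{pq}=0$), whereas you streamline both cases at once by first observing that the entire double sum collapses to the single $(i,i)$ term and then vanishes; your explicit remark on smoothness of $L$ across the faces $r_k=0$ is a point the paper leaves implicit.
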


\begin{proof} Using the presentation $F(y)=\sqrt{L(\langle y_1,y_1\rangle ,\cdots,\langle y_s,y_s\rangle )}$
in (\ref{001}) with $y=y_1+\cdots+y_s\in\m\backslash\{0\}$,
 we calculate the fundamental of $F$ as follows. For any $u=u_1+\cdots+u_s$, $v=v_1+\cdots+v_s$, with $u_p,v_p\in\m_p$ for each $p$,
\begin{eqnarray*}
g_y(u,v)&=&\frac12\frac{\partial^2}{\partial s\partial t}|_{s=t=0}
F^2(y+su+tv)\\
&=&\sum_{p=1}^s L_p\langle u_p,v_p\rangle +2\sum_{p=1}^s\sum_{q=1}^s
L_{pq}\langle y_p,u_p\rangle \langle y_q,v_q\rangle .
\end{eqnarray*}
Here $L_p$ and $L_{pq}$ are the values of $\frac{\partial}{\partial t_p}L(t_1,\cdots,t_s)$ and $\frac{\partial^2}{\partial t_p\partial t_q} L(t_1,\cdots,t_s)$ at $(\langle y_1,y_1\rangle ,\cdots,$ $\langle y_s,y_s\rangle )$ respectively.

When $y\in\m_i\backslash\{0\}$ and $u=u_j\in\m_j$ for some $j\neq i$,
$$g_y(u,v)=L_j\langle u,v_j\rangle =L_j\langle u,v\rangle .$$
So $g_y(u,v)=0$ if we further have
$\langle u,v\rangle =0$.

When $y\in\m_i\backslash\{0\}$ and $u=u_i\in\m_i$,
$$g_y(u,v)=L_i\langle u,v\rangle+2L_{ii}\langle y,u\rangle
\langle y,v\rangle.$$
The positive 1-homogeneity of $L$ implies
$$\langle y,y\rangle^2
L_{ii}=\sum_{p=1}^s\sum_{q=1}^s \langle y_p,y_p\rangle\langle y_q,y_q\rangle L_{pq}=0,$$
i.e., $L_{ii}=0$. So we still have $g_y(u,v)=L_i\langle u,v\rangle=0$ when
$\langle u,v\rangle=0$.
This ends the proof of Lemma \ref{lemma-2}.
\end{proof}

As the corollary of Lemma \ref{lemma-1} and Lemma \ref{lemma-2}, we have

\begin{lemma}\label{lemma-3}
Let $F$ be a very standard reversible homogeneous Finsler metric on $G/H$,
corresponding to the decomposition $\m=\m_1+\cdots+\m_s$. For any linearly
independent commuting pair $y\in\m_i$ and $v\in\m_j$,
the flag curvature for $(o,y,y\wedge v)$ vanishes.
\end{lemma}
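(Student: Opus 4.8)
The plan is to obtain Lemma~\ref{lemma-3} as an immediate consequence of Lemma~\ref{lemma-1}, taking the distinguished vector $u$ there to be $y$. Since $y\in\m_i$ and $v\in\m_j$ are assumed linearly independent and commuting, the only hypotheses of Lemma~\ref{lemma-1} that remain to be checked are the three vanishing conditions
\[
 g_y(y,[y,\m]_\m)=g_y(y,[v,\m]_\m)=g_y(v,[y,\m]_\m)=0;
\]
note that the first of these is also precisely the standing hypothesis $g_y(y,[y,\m]_\m)=0$ required in Theorem~\ref{theorem 4.1}. So the whole argument reduces to verifying this triple of identities.

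The strategy for that is to first prove the analogous statements with the fundamental tensor $g_y$ replaced by the bi-invariant product $\langle\cdot,\cdot\rangle$, and then upgrade them via Lemma~\ref{lemma-2}. For the first step, fix any $w\in\m$ and use the invariance identity $\langle[X,Y],Z\rangle=\langle X,[Y,Z]\rangle$: this gives $\langle y,[y,w]\rangle=\langle[y,y],w\rangle=0$, $\langle y,[v,w]\rangle=\langle[y,v],w\rangle=0$, and $\langle v,[y,w]\rangle=-\langle[y,v],w\rangle=0$, where the last two use $[y,v]=0$. Since $y,v,[y,w]_\m,[v,w]_\m$ all lie in $\m$ and $\langle\cdot,\cdot\rangle$ makes $\m$ and $\h$ orthogonal, the same identities hold with $[\cdot,\cdot]$ replaced by its $\m$-component $[\cdot,\cdot]_\m$.

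For the second step, observe that in each of the three pairings one of the two entries lies entirely inside a single summand of the decomposition --- namely $y\in\m_i$ in the first two, and $v\in\m_j$ in the third. Hence Lemma~\ref{lemma-2}, applied with its ``$y$'' being our $y\in\m_i\setminus\{0\}$ and using the symmetry of $g_y$, converts each of the $\langle\cdot,\cdot\rangle$-orthogonality relations just established into the corresponding $g_y$-orthogonality relation. Letting $w$ range over all of $\m$ then yields the three displayed conditions, and Lemma~\ref{lemma-1} gives $K(o,y,y\wedge v)=0$.

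The argument has essentially no genuinely hard part; the only point needing care is the bookkeeping when invoking Lemma~\ref{lemma-2}. That lemma requires one of its two vector arguments to be homogeneous with respect to the decomposition $\m=\m_1+\cdots+\m_s$, and one must make sure that in every instance this role is played by $y$ or by $v$ --- never by the bracket terms $[y,w]_\m$ or $[v,w]_\m$, which need not be homogeneous --- and one should keep the sign conventions in the bi-invariance identity straight. Apart from this the proof is routine.
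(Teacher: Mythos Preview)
Your proof is correct and follows essentially the same route as the paper: establish the three $\langle\cdot,\cdot\rangle$-orthogonalities from bi-invariance and $[y,v]=0$, upgrade them to $g_y$-orthogonalities via Lemma~\ref{lemma-2}, and then invoke Lemma~\ref{lemma-1}. Your write-up is in fact more careful than the paper's about the bookkeeping in the application of Lemma~\ref{lemma-2} (making sure the homogeneous slot is occupied by $y$ or $v$ and never by a bracket term), which is entirely appropriate.
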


\begin{proof}The bi-invariance of $\langle\cdot,\cdot\rangle$ and orthogonality between $\h$ and $\m$ implies
$$\langle v,[y,\m]_\m\rangle=\langle v,[y,\m]\rangle=\langle[v,y],\m\rangle=0,$$
so we have $g_y(v,[y,\m])=0$ by Lemma \ref{lemma-2}.
Similar argument also proves
$$g_y(y,[y,\m])=g_y(y,[v,\m])=0.$$
So we get $K(o,y,y\wedge v)=0$ by Lemma \ref{lemma-1}.
\end{proof}

Now we discuss each manifold in the list of the five candidates.

{\bf Case 1}: $G/H=Sp(2)/\mathrm{diag}(z,z^3)$ with $z\in\mathbb{C}$ and $|z|=1$. In this case,
the root plane decomposition of $\g$ is
$$\g=\mathfrak{t}+\g_{\pm 2e_1}+\g_{\pm 2e_2}+\g_{\pm (e_1+e_2)}+\g_{\pm (e_1-e_2)},$$
$\mathfrak{t}=\mathbb{R}e_1+\mathbb{R}e_2$,
$\mathfrak{h}=\mathbb{R}(e_1+3e_2)$, and $\mathfrak{m}=\mathfrak{m}_1+\mathfrak{m}_2+\mathfrak{m}_3+\m_4$, with
$\mathfrak{m}_1=\mathbb{R}(3e_1-e_2)$, $\mathfrak{m}_2=\g_{\pm (e_1-e_2)}+\g_{\pm 2e_1}$, $\m_3=\mathfrak{g}_{\pm (e_1+e_2)}$,
and $\m_4=\g_{\pm 2e_2}$, is the decomposition for a standard homogeneous
Finsler metric on $G/H$.

Since any nonzero $y\in\g_{\pm 2e_1}\subset\m_2$ and any
nonzero $v\in\g_{\pm 2e_2}\subset\m_4$ are a linearly independent commuting pair, Lemma \ref{lemma-3} tells us that $(G/H,F)$ is not positively curved.

{\bf Case 2}: $G/H=Sp(2)/\mathrm{diag}(z,z)\mbox{ with }z\in\mathbb{C}$ and $|z|=1$.
In this case, the root system of $\g$ is $\{\pm 2e_1$, $\pm 2e_2$, $\pm e_1\pm e_2\}$,
$\mathfrak{h}=\mathbb{R}(e_1+e_2)$,
and $\m=\m_1+\m_2$, with $\m_1=\mathbb{R}(e_1-e_2)+\g_{\pm(e_1-e_2)}$
and $\m_2=\g_{\pm(e_1+e_2)}+\g_{\pm 2e_1}+\g_{\pm 2e_2}$.

We can apply Lemma \ref{lemma-3} to the linearly independent commuting pair
$y\in\mathbb{R}(e_1-e_2)\subset\m_1$ and $v\in\g_{\pm(e_1+e_2)}\subset\m_2$, and see that $(G/H,F)$ is not positively curved.

{\bf Case 3}: $G/H=Sp(3)/\mathrm{diag}(z,z,q)\mbox{ with }z\in\mathbb{C}, q\in\mathbb{H}$ and $|q|=|z|=1$. In this case, the root system of $\g$ is
$\{\pm e_i\pm e_j, \forall 1\leq i< j\leq 3; \pm 2e_i,\forall 1\leq i\leq 3
\}$, $\mathfrak{h}=\mathbb{R}(e_1+e_2)+\mathbb{R}e_3+\g_{\pm 2e_3}$,
and $\m=\m_1+\m_2+\m_3$, with $\m_1=\g_{\pm (e_1+e_3)}+\g_{\pm(e_2+e_3)}+\g_{\pm(e_1-e_3)}+\g_{\pm(e_2-e_3)}$,
$\m_2=\g_{\pm(e_1+e_2)}+\g_{\pm2e_1}+\g_{\pm2e_2}$ and
$\m_3=\mathbb{R}(e_1-e_2)+\g_{\pm(e_1-e_2)}$.

We can apply Lemma \ref{lemma-3} to the linearly independent commuting pair
$y\in\g_{\pm2e_1}\subset\m_2$ and $v\in\g_{\pm2e_2}\subset\m_2$, and see that $(G/H,F)$ is not positively curved.

{\bf Case 4}: $SU(4)/\mathrm{diag}(zA,z,\overline{z}^3)\mbox{ with }z\in\mathbb{C}$, $|z|=1$ and $A\in SU(2)$. In this case, the root system of $\g$
is $\{\pm(e_i-e_j),\forall 1\leq i<j\leq 4\}$,
$\h=\mathbb{R}(e_1+e_2+e_3-3e_4)+\mathbb{R}(e_1-e_2)+\g_{\pm(e_1-e_2)}$, and $\m=\m_1+\m_2+\m_3+\m_4$, with
$\m_1=\g_{\pm(e_1-e_3)}+\g_{\pm(e_2-e_3)}$,
$\m_2=\g_{\pm(e_1-e_4)}+\g_{\pm(e_2-e_4)}$,
$\m_3=\g_{\pm(e_3-e_4)}$ and $\m_4=\mathbb{R}(e_1+e_2-2e_3)$.

We can apply Lemma \ref{lemma-3} to the linearly independent commuting pair
$y\in\g_{\pm(e_1-e_3)}\subset\m_1$ and $v\in\g_{\pm(e_2-e_4)}\subset\m_2$,
and see that $(G/H,F)$ is not positively curved.

{\bf Case 5}: $G_2/SU(2)$ with $SU(2)$ the normal subgroup of $SO(4)$ corresponding to the long root. In this case, the root system of $\g$ is
$\{\pm e_1,\pm\sqrt{3}e_2,\pm\frac12e_1\pm\frac{\sqrt{3}}{2}e_2,
\pm\frac32e_1\pm\frac{\sqrt{3}}{2}e_2\}$,  $\h=\mathbb{R}e_2+\g_{\pm\sqrt{3}e_2}$, and $\m=\m_1+\m_2$, with
$\m_1=\g_{\pm(\frac12e_1+\frac{\sqrt{3}}{2}e_2)}+
\g_{\pm(\frac12e_1-\frac{\sqrt{3}}{2}e_2)}+
\g_{\pm(\frac32e_1+\frac{\sqrt{3}}{2}e_2)}+
\g_{\pm(\frac32e_1-\frac{\sqrt{3}}{2}e_2)}$
and $\m_2=\mathbb{R}e_1+\g_{\pm e_1}$.

We can apply Lemma \ref{lemma-3} to the linearly independent commuting pair
$y\in\g_{\pm(\frac12e_1-\frac{\sqrt{3}}{2}e_2)}\subset\m_1$ and $v\in\g_{\pm(\frac32e_1+\frac{\sqrt{3}}{2}e_2)}\subset\m_1$ and see that $(G/H,F)$ is not positively curved.

Above discussion can be summarized as

\begin{theorem}All reversible positively curved very standard homogeneous Finsler manifolds admits positively curved homogeneous Riemannian metrics.
\end{theorem}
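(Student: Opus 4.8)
The plan is to read the statement off the verifications in Cases 1--5 above. By Corollary 1.2 in \cite{XZ2017}, if $(G/H,F)$ is a reversible homogeneous Finsler manifold with positive flag curvature and $G$ is compact, connected, simply connected and simple, then either $G/H$ already appears in the list of positively curved homogeneous Riemannian manifolds recalled in Section 1, or $G/H$ is one of the five exceptional coset spaces treated in Cases 1--5. So it suffices to prove that, when $F$ is moreover very standard, the second alternative is impossible.

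I would argue by contradiction: assume $G/H$ is one of the five coset spaces of Cases 1--5 and carries a positively curved very standard reversible homogeneous Finsler metric $F$. Since $F$ is very standard, it determines a unique $\langle\cdot,\cdot\rangle$-orthogonal $\mathrm{Ad}(H)$-invariant decomposition $\m=\m_1+\cdots+\m_s$ into $H$-isotypic summands, and every root space $\g_{\pm\alpha}\subset\m$, as well as the toral line $\mathfrak{t}\cap\m$, lies inside a single summand $\m_i$. In each of the five cases one exhibits a concrete linearly independent commuting pair $y\in\m_i$, $v\in\m_j$ (with $i=j$ allowed) drawn from these root spaces and toral lines --- for instance $y\in\g_{\pm2e_1}$ and $v\in\g_{\pm2e_2}$ in Cases 1 and 3 --- whose commutativity is immediate from root-system arithmetic, since the pertinent sums and differences of roots are never roots (or the pertinent root vanishes on the chosen toral direction). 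Lemma \ref{lemma-3} then forces $K(o,y,y\wedge v)=0$, contradicting positivity. Hence none of the five coset spaces can carry such a metric, so $G/H$ must belong to the Riemannian list, which is exactly the assertion.

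All conceptual content is already packaged in Lemmas \ref{lemma-1}--\ref{lemma-3}: for a very standard reversible $F$ the bi-invariance of $\langle\cdot,\cdot\rangle$ transfers enough $g_y$-orthogonality --- via Lemma \ref{lemma-2} --- that the hypotheses of Lemma \ref{lemma-1} hold for any commuting pair taken from the summands $\m_i$, collapsing the homogeneous flag curvature formula to zero. The only remaining work is combinatorial bookkeeping: for each of the five coset spaces one must pin down the isotropy decomposition and locate a commuting pair lying in its (possibly coinciding) summands, i.e. two root spaces $\g_{\pm\alpha},\g_{\pm\beta}$, or a root space together with the toral line, with $\alpha\pm\beta$ never a root of $\g$. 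I expect this routine check --- rather than any conceptual difficulty --- to be the main obstacle.
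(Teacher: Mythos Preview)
Your proposal is correct and follows essentially the same approach as the paper: reduce via Corollary~1.2 in \cite{XZ2017} to the five exceptional coset spaces, then in each case exhibit a linearly independent commuting pair $y\in\m_i$, $v\in\m_j$ (allowing $i=j$) drawn from root spaces or the toral line, and apply Lemma~\ref{lemma-3} to obtain a zero flag curvature. The paper carries out exactly the ``routine bookkeeping'' you anticipate, with the same pair $\g_{\pm2e_1},\g_{\pm2e_2}$ in Cases~1 and~3, a toral vector paired with a root space in Case~2, and root-space pairs with non-root sums/differences in Cases~4 and~5.
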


\subsection{The non-reversible case}

\label{sub-section-non-reversible}

In this subsection, we further assume $F$ is irreversible.
Then there exists a summand in $\m=\m_1+\cdots+\m_s$, which is assumed to be $\m_s$, satisfying $\dim \m_s=1$ and $\mathfrak{c}_\mathfrak{g}(\h)\cap\m=\mathfrak{n}_\g(\h)\cap\m=\m_s$. We denote by $\mathfrak{h}'=\h\oplus \m_s$ the Lie algebra of the identity component $H'$ of the normalizer $N_G(H)$ of $H$ in $G$ and $\m'=\m_1+\cdots+\m_{s-1}$.

\begin{lemma}\label{lemma-4}
The decomposition $\g=\mathfrak{h}'+\m'$ is orthogonal with respect to $\langle\cdot,\cdot\rangle$, and reductive for the homogeneous manifold $G/H'$. In particular, $\mathrm{Ad}(H')$ acts orthogonally on each $\m_i$  with respect to $\langle\cdot,\cdot\rangle$.
\end{lemma}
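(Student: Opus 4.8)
The plan is to verify the three assertions of Lemma~\ref{lemma-4} in order: orthogonality of $\g=\h'+\m'$, reductivity of this decomposition for $G/H'$, and the orthogonality of the $\mathrm{Ad}(H')$-action on each $\m_i$. First I would observe that $\h'=\h\oplus\m_s$ and $\m'=\m_1+\cdots+\m_{s-1}$ by definition, so orthogonality of $\g=\h'+\m'$ is immediate from the fact that the decomposition $\g=\h+\m_1+\cdots+\m_s$ is $\langle\cdot,\cdot\rangle$-orthogonal (each $\m_i$ is $\langle\cdot,\cdot\rangle$-orthogonal to $\h$ and to the other summands, as noted in the Remark). So no real work is needed here.

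Next I would prove reductivity, i.e. that $\m'$ is $\mathrm{Ad}(H')$-invariant. Since $H'$ is connected with Lie algebra $\h'=\h+\m_s$, it suffices to show $[\h',\m']\subset\m'$, and since $[\h,\m']\subset\m'$ already (each $\m_i$, $i<s$, is $\mathrm{Ad}(H)$-invariant), it remains to check $[\m_s,\m']\subset\m'$. Here I would use that $\m_s\subset\mathfrak{c}_\g(\h)$: for $X\in\m_s$, the operator $\mathrm{ad}(X)$ commutes with $\mathrm{ad}(\h)$ on $\g$, hence preserves each $\mathrm{Ad}(H)$-isotypic component. By the very standard hypothesis, the summands $\m_1,\dots,\m_{s-1}$ carry pairwise inequivalent irreducible $H$-representations, and $\m_s$ is one-dimensional hence carries the trivial $H$-representation; moreover $\mathrm{ad}(X)$ is skew-symmetric with respect to $\langle\cdot,\cdot\rangle$ (bi-invariance), so $\mathrm{ad}(X)(\h')\subset\h'$ forces $\mathrm{ad}(X)(\m')\subset\m'$ as well, once we know $\mathrm{ad}(X)$ maps $\m'$ into $\h'+\m'=\g$ preserving isotypic type. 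The one subtlety: $\mathrm{ad}(X)$ a priori could send an $H$-irreducible piece of $\m'$ into the trivial isotypic component $\mathfrak{c}_\g(\h)$; but if $\m_i$ ($i<s$) is nontrivial irreducible then its image under any $H$-equivariant map lies in the nontrivial isotypic part, so it cannot land in $\mathfrak{c}_\g(\h)$; and if $\m_i$ were itself one-dimensional (trivial), then by the maximality built into $\mathfrak{c}_\g(\h)\cap\m=\mathfrak{n}_\g(\h)\cap\m=\m_s$ there are no other trivial summands in $\m$, so this case does not occur. Hence $\mathrm{ad}(X)\m'\subset\m'$, giving reductivity.

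Finally, the orthogonality of $\mathrm{Ad}(H')$ on each $\m_i$ ($i<s$): $H'$ acts on $\g$ preserving $\langle\cdot,\cdot\rangle$ (it is a subgroup of $G$, and $\langle\cdot,\cdot\rangle$ is $\mathrm{Ad}(G)$-invariant), it preserves $\m'$ by reductivity just proved, and it preserves each $\m_i$: for $i<s$, $\m_i$ is the unique sub-$H$-representation of its isotypic type inside $\m'$, and since $\mathrm{Ad}(H')$ normalizes $\mathrm{Ad}(H)$ it permutes the isotypic components of $\m'$; connectedness of $H'$ then forces each isotypic component, hence each $\m_i$, to be $\mathrm{Ad}(H')$-invariant. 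Restricting the $\langle\cdot,\cdot\rangle$-orthogonal $\mathrm{Ad}(H')$-action to the invariant subspace $\m_i$ gives the claim. I expect the main obstacle to be the middle step — pinning down precisely why $\mathrm{ad}(\m_s)$ cannot mix a nontrivial summand $\m_i$ with the trivial isotypic part of $\g$ — which is handled by combining Schur's Lemma with the defining maximality of $\m_s$ as $\mathfrak{c}_\g(\h)\cap\m$; everything else is bookkeeping with bi-invariance and $\mathrm{Ad}(H)$-invariance.
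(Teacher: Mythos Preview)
Your proposal is correct and rests on the same key observation as the paper's proof: for $u\in\m_s\subset\mathfrak{c}_\g(\h)$ the map $\mathrm{ad}(u)$ is an $H$-module homomorphism, so Schur's Lemma together with the very standard hypothesis (and skew-symmetry to exclude a component in $\h$) forces $[\m_s,\m_i]\subset\m_i$ for every $i$. The paper simply proves this single inclusion and declares all three assertions to follow immediately, whereas you treat reductivity and the invariance of each $\m_i$ separately; one small imprecision in your ``one subtlety'' paragraph is the conflation of \emph{one-dimensional} with \emph{trivial} as an $H$-module, but the argument survives since any trivial-isotypic summand of $\m$ must lie in $\mathfrak{c}_\g(\h)\cap\m=\m_s$.
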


\begin{proof}
We only need to show $[\m_s,\m_i]\subset\m_i$ for each $i$.
The other statements in Lemma \ref{lemma-4} are obvious or follows immediately.
For any $u\in\m_s$, $\mathrm{ad}(u):\m_i\rightarrow[u,\m_i]$ is a surjective homomorphism between $\h$-modules. So the irreducible sub $H$-representations in $[u,\m_i]$ are equivalent to that in $\m_i$, which implies $[u,\m_i]\subset\m_i$ by Schur Lemma.
\end{proof}

Using the projection $\mathrm{pr}:\m\rightarrow\m'$, which maps $y=y_1+\cdots+y_s$ with $y_i\in\m_i$ for each $i$ to $y'=y_1+\cdots+y_{s-1}$,
the Minkowski norm $F$ on $\m$ induces a Minkowski norm $F'$ on $\m'$ such that $\mathrm{pr}:(\m,F)\rightarrow(\m',F')$ is a submersion. By Lemma \ref{lemma-4},
both $F$ and $\mathrm{pr}$ are
$\mathrm{Ad}(H')$-invariant, so $F'$, which is uniquely determined by $F$ and $\mathrm{pr}$, is $\mathrm{Ad}(H')$-invariant as well, and
it defines a homogeneous Finsler metric on $G/H'$, such that the canonical projection $\pi:(G/H,F)\rightarrow(G/H',F')$ is a Finsler submersion. Though not necessary,
the $\mathrm{Ad}(H')$-invariance of $F'$ on $\m'$ can be strengthened to the
$O(\m_1)\times\cdots\times O(\m_{s-1})$-invariance, where the orthogonality is with respect to $\langle\cdot,\cdot\rangle$ in each $\m_i$. So $(G/H',F')$ is a reversible very standard homogeneous Finsler manifold.
The following theorem in \cite{AD2001} indicates that $(G/K,F')$ is positively curved.

\begin{theorem}
Let $\pi:(M,F)\rightarrow(M',F')$ be a Finsler submersion. Then any flag $(x',y',\mathbf{P}')$ of $M'$ with $x'=\pi(x)$ can be lifted to
a flag $(x,y,\mathbf{P})$ of $M$, such that $K'(x',y',\mathbf{P}')\geq K(x,y,\mathbf{P})$, where $K'$ and $K$ are flag curvatures for $(M',F')$ and $(M,F)$ respectively.
\end{theorem}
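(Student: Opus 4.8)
The statement to prove is the Finsler submersion curvature comparison theorem of Alvarez Paiva and Dur\'an: given a Finsler submersion $\pi\colon(M,F)\to(M',F')$, every flag downstairs lifts to a flag upstairs with at least as small flag curvature. I will reconstruct the proof in the style that fits this paper, working entirely at a single point and reducing to a tangent space computation.

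\medskip

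\noindent\emph{Plan.} First I would recall the definition of a Finsler submersion: at each $x\in M$ with $x'=\pi(x)$, the differential $d\pi_x\colon T_xM\to T_{x'}M'$ is \emph{norm-nonincreasing} and in fact realizes $F'(x',\cdot)$ as the ``quotient norm'' of $F(x,\cdot)$, so that the horizontal subspace $\mathcal H_x=\{y\in T_xM : F(x,y)=F'(x',d\pi_x y)\}\cup\{0\}$ (equivalently, $y$ for which $d\pi_x$ is an infinitesimal isometry in the sense of the fundamental tensor) gives a linear horizontal lift $y'\mapsto y$ with $F(x,y)=F'(x',y')$, and $g_y$-orthogonality of the horizontal and vertical spaces. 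The key infinitesimal fact (the Finsler O'Neill setup, due to Alvarez Paiva--Dur\'an) is that for a horizontal $y$ one has $g_y(\tilde u,\tilde v)=g'_{y'}(u,v)$ for horizontal lifts $\tilde u,\tilde v$, and the geodesic sprays are $\pi$-related up to a vertical correction. Concretely, if $y$ is a horizontal geodesic vector of $F$, then its projection $y'=d\pi y$ is a geodesic vector of $F'$, and conversely every geodesic of $M'$ lifts to a horizontal geodesic of $M$; this is what lets us compare Jacobi fields.

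\medskip

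\noindent\emph{Key steps, in order.} (1) Given the flag $(x',y',\mathbf P')$ with $\mathbf P'=\mathrm{span}\{y',v'\}$, pick $x\in\pi^{-1}(x')$, let $y$ be the horizontal lift of $y'$ and $v$ the horizontal lift of $v'$, and set $\mathbf P=y\wedge v$; this is the lifted flag, and it is a genuine flag since $d\pi$ restricted to $\mathcal H_x$ is injective. (2) Observe that the denominators match: $g_y(y,y)=g'_{y'}(y',y')$, $g_y(v,v)=g'_{y'}(v',v')$, $g_y(y,v)=g'_{y'}(y',v')$, because $d\pi$ is a $g_y$-to-$g'_{y'}$ isometry on horizontal vectors. (3) Reduce the numerator comparison to Jacobi fields: realize $g_y(R_yv,v)$ via the second variation of arclength (or directly via the index form) along the $F$-geodesic $\gamma$ with $\dot\gamma(0)=y$, and $g'_{y'}(R'_{y'}v',v')$ via the corresponding geodesic $\gamma'=\pi\circ\gamma$ of $F'$. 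Take the $F'$-Jacobi field $J'$ along $\gamma'$ with the appropriate initial data determined by $v'$ and lift it to the horizontal Jacobi field candidate $\tilde J$ along $\gamma$; then $d\pi(\tilde J)=J'$ but $\tilde J$ is generally \emph{not} Jacobi for $F$ — it fails by a vertical term coming from the second fundamental form / integrability tensor of the submersion. (4) Plug $\tilde J$ into the $F$-index form as a comparison field against the true $F$-Jacobi field with the same boundary values: by the minimizing property of Jacobi fields for the index form, the true $F$-index is $\le$ the index of $\tilde J$, and the index of $\tilde J$ equals the $F'$-index of $J'$ minus a manifestly nonnegative O'Neill-type correction (the squared norm of the vertical defect). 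This yields $K(x,y,\mathbf P)\le K'(x',y',\mathbf P')$ after dividing by the common denominator. Alternatively, and perhaps cleaner for this paper, I would cite that $R_y v = (R'_{y'}v')^{\mathrm{hor}} + (\text{vertical, with controlled sign})$ is exactly the Finsler O'Neill formula from \cite{AD2001} and read off the inequality; but since the statement is the one being invoked, a self-contained index-form argument is preferable.

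\medskip

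\noindent\emph{Main obstacle.} The delicate point is step (3)--(4): making precise the ``horizontal lift of a Jacobi field'' in the Finsler setting and controlling its vertical defect. Unlike the Riemannian case, $F$-parallel transport and the Chern connection are $y$-dependent, so the horizontal distribution interacts with the reference vector in a more intricate way; one must check that the lift of $J'$ can be chosen so that at the endpoints it has the prescribed variation data (so it is an admissible comparison field in the $F$-index form) and that the cross terms in expanding the index form assemble into a nonnegative quantity rather than a sign-indefinite one. This is precisely where the hypothesis that $\pi$ is a Finsler \emph{submersion} (not merely norm-nonincreasing) is used — it forces the horizontal-vertical splitting to be $g_y$-orthogonal for horizontal $y$, which is what makes the correction term a perfect square. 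Once that is in place, the rest is the standard index-form comparison, and the conclusion $K'(x',y',\mathbf P')\ge K(x,y,\mathbf P)$ follows.
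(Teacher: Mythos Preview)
The paper does not prove this theorem at all; it simply quotes it as a result from \cite{AD2001} (\'Alvarez Paiva and Dur\'an) and uses it as a black box to conclude that $(G/H',F')$ is positively curved. So there is nothing to compare your proposal against in the paper's own text.

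That said, your sketch is a reasonable outline of the original argument: horizontal lift of the flag, matching of the fundamental-tensor denominators via the submersion property, and an index-form/Jacobi-field comparison yielding a nonnegative O'Neill-type defect. The genuinely delicate step you correctly flag---making the horizontal lift of a Jacobi field precise in the $y$-dependent Finsler setting and showing the cross terms assemble into a nonnegative quantity---is exactly where the work in \cite{AD2001} lies, and your proposal does not actually carry it out; it only names it. For the purposes of this paper that is fine, since the authors are content to cite the result, but as a self-contained proof your proposal is still a plan rather than a proof.
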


In \cite{XDHH2015}, even dimensional positively curved homogeneous Finsler manifolds are completely classified up to local isometries, or equivalently, in the Lie algebraic level.
Here we only need to concern those $G/H'$ such that $\g$ is simple and $\mathfrak{c}(\mathfrak{h}')$ has a positive dimension, and present them with compact connected simply connected $G$ and connected $H'$. So all possible $G/H'$ and the corresponding $G/H$, respectively, can be listed as follows:
\begin{eqnarray}
G/H'&=&SU(n)/U(n-1),\quad\ \ Sp(n)/U(1)Sp(n-1),\quad SU(3)/T^2,\nonumber\\
G/H&=& SU(n)/SU(n-1),\quad Sp(n)/Sp(n-1),\quad\ \ \ \ \ \ SU(3)/S^1.\label{003}
\end{eqnarray}

The first two homogeneous manifolds in (\ref{003}) are homogeneous spheres,
on which the normal homogeneous Riemannian metrics are very standard and positively curved. For the last one in (\ref{003}), $S^1$ is a closed connected one dimensional subgroup of the maximal torus $T^2$ in $SU(3)$. We may present this $S^1$ as $S^1_{k,l}=\{\mathrm{diag}(z^k,z^l,z^{-k-l}),\forall z\in\mathbb{C},  |z|=1\}$, for some nonzero integers $k$ and $l$. We only need to prove $k+l\neq0$, then $SU(3)/S^1_{k,l}$ is an Aloff Wallach space,
which ends the proof of Theorem \ref{main-thm}.

Assume conversely that $k+l=0$. In this case, the root system of $\mathfrak{g}$ is $\{\pm (e_1-e_2),\pm (e_1-e_3),\pm(e_2-e_3)\}$.
$\mathfrak{h}=\mathbb{R}(e_1-e_2)$, $\m=\m_1+\m_2+\m_3$ with
$\m_1=\g_{\pm(e_1-e_2)}$, $\m_2=\g_{\pm(e_2-e_3)}+\g_{\pm(e_1-e_3)}$,
$\m_3=\mathbb{R}(e_1+e_2-2e_3)$.

\begin{lemma}\label{lemma-5}
Let $F$ be a non-reversible very standard homogeneous Finsler metric on $G/H$ corresponding to the decomposition $\m=\m_1+\cdots+\m_s$ with $\dim \m_s=1$. Then for any nonzero $y\in\m_s$, we have $g_y(u,v)=0$
whenever $u\in\m_j$ for some $j$ and $\langle u,v\rangle=0$.
\end{lemma}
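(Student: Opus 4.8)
The plan is to reproduce the computation behind Lemma~\ref{lemma-2}, now using the presentation~(\ref{002}) that is forced in the non-reversible case. First I observe that, under the standing hypotheses, $\m_s$ is the \emph{only} one-dimensional summand: any one-dimensional $\m_i$ is a trivial $\mathrm{Ad}(H)$-module, since a connected compact group has no nontrivial real one-dimensional orthogonal representation, hence $\m_i\subset\mathfrak{c}_\g(\h)\cap\m=\m_s$; and two distinct such summands would both be trivial $H$-modules, contradicting very standardness. Thus $\dim\m_i\ge2$ for every $i<s$, and $F$ is presented as in~(\ref{002}),
\[
F(y)=\sqrt{L(\langle y_1,y_1\rangle,\dots,\langle y_{s-1},y_{s-1}\rangle,y_s)},
\]
where I identify $\m_s$ with $\mathbb{R}$ through a $\langle\cdot,\cdot\rangle$-unit vector, so that the last slot of $L$ becomes an ordinary (not necessarily even) real variable.

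Next I would compute the fundamental tensor at a point $y\in\m_s\setminus\{0\}$, exactly as in the proof of Lemma~\ref{lemma-2}. Write $u=u_1+\dots+u_s$ and $v=v_1+\dots+v_s$ with $u_p,v_p\in\m_p$, set $T_p=\langle y_p+\sigma u_p+\tau v_p,\,y_p+\sigma u_p+\tau v_p\rangle$ for $p<s$, and let $T_s$ be the real coordinate of $y_s+\sigma u_s+\tau v_s$, so that $F^2(y+\sigma u+\tau v)=L(T_1,\dots,T_s)$. Two features make the chain rule collapse at $\sigma=\tau=0$: the $\m_s$-slot is linear in its argument, so $\partial^2_{\sigma\tau}T_s=0$; and $y\in\m_s$ forces $y_p=0$ for $p<s$, so $\partial_\sigma T_p|_0=2\langle y_p,u_p\rangle=0$ and $\partial_\tau T_p|_0=2\langle y_p,v_p\rangle=0$ whenever $p<s$, which annihilates every second-order ``$L_{pq}$'' contribution except the one with $p=q=s$. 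Carrying out $\tfrac12\partial^2_{\sigma\tau}|_0$ then yields
\[
g_y(u,v)=\sum_{p=1}^{s-1}L_p\langle u_p,v_p\rangle+\tfrac12\,L_{ss}\langle u_s,v_s\rangle,
\]
where $L_p$ and $L_{ss}$ are the partials of $L$ evaluated at the arguments coming from $y$.

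Finally I would conclude by cases on $j$. If $j<s$, then $u=u_j$ with all other components zero, so $g_y(u,v)=L_j\langle u_j,v_j\rangle=L_j\langle u,v\rangle$; if $j=s$, then $u=u_s$, so $g_y(u,v)=\tfrac12 L_{ss}\langle u_s,v_s\rangle=\tfrac12 L_{ss}\langle u,v\rangle$. In either case $g_y(u,v)$ is a scalar multiple of $\langle u,v\rangle$, hence it vanishes once $\langle u,v\rangle=0$, which is the assertion.

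This is essentially a bookkeeping argument, so I do not anticipate a serious obstacle; the only point requiring care is tracking precisely which chain-rule terms survive at $y\in\m_s$. It is worth noting that, unlike in Lemma~\ref{lemma-2}, one does \emph{not} need the positive homogeneity of $L$ to eliminate the surviving diagonal term $L_{ss}$: it already appears multiplied by $\langle u_s,v_s\rangle$, which equals $0$ when $u\in\m_j$ with $j<s$ and equals $\langle u,v\rangle$ when $u\in\m_s$.
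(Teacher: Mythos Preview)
Your proposal is correct and follows essentially the same approach as the paper: compute $g_y(u,v)$ from the presentation~(\ref{002}) and observe that, when $y\in\m_s\setminus\{0\}$, all mixed $L_{pq}$ contributions with $p<s$ or $q<s$ drop out because $y_p=0$, leaving only $\sum_{p<s}L_p\langle u_p,v_p\rangle+\tfrac12 L_{ss}u_sv_s$, each term of which vanishes under the hypotheses. Your extra paragraph explaining why $\m_s$ is the unique one-dimensional summand (so that~(\ref{002}) applies) is a nice piece of care that the paper leaves implicit from the surrounding context, and your closing remark that positive homogeneity is not needed here, unlike in Lemma~\ref{lemma-2}, is also a valid observation the paper does not make.
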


\begin{proof}We present the Minkowski norm $F$ on $\m$ as in (\ref{002}), i.e., $$F(y)=\sqrt{L(\langle y_1,y_1\rangle,\cdots,\langle y_{s-1},y_{s-1}\rangle,y_s)},$$
in which $y=y_1+\cdots+y_s$ with $y_p\in\m_p$ for each $p$, and in particular, $y_s\in\m_s=\mathbb{R}$. Then calculation shows that, for any $u=u_1+\cdots+u_s$ and $v=v_1+\cdots+v_s$, with $u_p,v_p\in\m_p$ for each $i$,
\begin{eqnarray*}
g_y(u,v)&=&\frac12\frac{\partial^2}{\partial s\partial t}|_{s=t=0}
F^2(y+su+tv)\\
&=&2\sum_{p=1}^{s-1}\sum_{q=1}^{s-1}L_{pq}\langle y_p,u_p\rangle
\langle y_q,v_q\rangle+\sum_{p=1}^{s-1}L_{ps}u_s \langle y_p,v_p\rangle
\\
& &+\sum_{p=1}^{s-1}L_{ps}v_s\langle y_p,u_p\rangle+\sum_{p=1}^{s-1}L_p\langle u_p,v_p\rangle+\frac12L_{ss}u_sv_s,
\end{eqnarray*}
in which $L_p$ and $L_{pq}$ are similar to those in the proof of Lemma \ref{lemma-2}. When we have $y\in\m_s\backslash\{0\}$, $u\in\m_j$ for some $j$, and $v$ satisfying $\langle u,v\rangle=0$, we have
$$\langle y_p,u_p\rangle=\langle y_p,v_p\rangle=\langle u_p,v_p\rangle=0,\ \forall 1\leq p\leq s-1,\quad\mbox{and}\quad u_sv_s=0, $$
so $g_y(u,v)=0$ obviously.
\end{proof}

\begin{lemma}\label{lemma-6}
Let $F$ be a non-reversible very standard homogeneous Finsler metric on $G/H$, corresponding to the decomposition $\m=\m_1+\cdots+\m_s$
with $\dim \m_s=1$. Then for any linearly independent commuting pair $y\in\m_s$
and $v\in\m_j$ with $j<s$, the flag curvature for $(o,y,y\wedge v)$ vanishes.
\end{lemma}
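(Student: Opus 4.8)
The plan is to imitate the proof of Lemma~\ref{lemma-3} almost verbatim, the only change being that the non-reversible fundamental-tensor estimate Lemma~\ref{lemma-5} replaces Lemma~\ref{lemma-2}; this substitution is legitimate precisely because the base vector $y$ here lies in the one-dimensional summand $\m_s$, which is exactly the situation covered by Lemma~\ref{lemma-5}. Concretely, I would verify the three orthogonality hypotheses of Lemma~\ref{lemma-1} for the linearly independent commuting pair $u=y\in\m_s$ and $v\in\m_j$, namely
$$g_y(y,[y,\m]_\m)=g_y(y,[v,\m]_\m)=g_y(v,[y,\m]_\m)=0,$$
and then read off $K(o,y,y\wedge v)=0$ from Lemma~\ref{lemma-1}.

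First I would record the relevant identities coming from the bi-invariance of $\langle\cdot,\cdot\rangle$ and the orthogonality $\m\perp\h$: for $a\in\m$ and any $z,w\in\m$ one has $\langle z,[a,w]_\m\rangle=\langle z,[a,w]\rangle=-\langle[a,z],w\rangle$, since the $\h$-component of $[a,w]$ is $\langle\cdot,\cdot\rangle$-orthogonal to $z$. Taking $a=y$, $z=y$ yields $\langle y,[y,w]_\m\rangle=-\langle[y,y],w\rangle=0$; taking $a=y$, $z=v$ and using $[y,v]=0$ yields $\langle v,[y,w]_\m\rangle=-\langle[y,v],w\rangle=0$; taking $a=v$, $z=y$ and using $[v,y]=0$ yields $\langle y,[v,w]_\m\rangle=-\langle[v,y],w\rangle=0$. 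Thus the subspaces $[y,\m]_\m$ and $[v,\m]_\m$ are both $\langle\cdot,\cdot\rangle$-orthogonal to $y$, and $[y,\m]_\m$ is $\langle\cdot,\cdot\rangle$-orthogonal to $v$.

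Next I would invoke Lemma~\ref{lemma-5} with base vector the nonzero $y\in\m_s$. Filling its first slot with $y\in\m_s$ itself, the lemma gives $g_y(y,w')=0$ for every $w'$ with $\langle y,w'\rangle=0$; by the previous paragraph this yields $g_y(y,[y,\m]_\m)=g_y(y,[v,\m]_\m)=0$. Filling instead the first slot with $v\in\m_j$, the lemma gives $g_y(v,w')=0$ whenever $\langle v,w'\rangle=0$, hence $g_y(v,[y,\m]_\m)=0$. With all three hypotheses of Lemma~\ref{lemma-1} in place and $[y,v]=0$, we conclude $K(o,y,y\wedge v)=0$.

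I do not expect a genuine obstacle; the argument is the same bookkeeping as in Lemma~\ref{lemma-3}. The only points that need a little attention are: that Lemma~\ref{lemma-5} is legitimately applied to $g_y(y,\cdot)$, which is allowed because $y$ itself lies in a summand of the decomposition (namely $\m_s$); that replacing a bracket by its $\m$-projection changes nothing, since it is always paired with a vector of $\m$, which is $\langle\cdot,\cdot\rangle$-orthogonal to $\h$; and that the standing hypothesis $g_y(y,[y,\m]_\m)=0$ of Theorem~\ref{theorem 4.1}, inherited by Lemma~\ref{lemma-1}, is among the identities we establish, so no circularity arises.
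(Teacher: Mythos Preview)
Your proposal is correct and follows essentially the same approach as the paper: you use bi-invariance of $\langle\cdot,\cdot\rangle$ to obtain $\langle\cdot,\cdot\rangle$-orthogonality of $y$ and $v$ to the relevant bracket images, upgrade this to $g_y$-orthogonality via Lemma~\ref{lemma-5}, and then conclude by Lemma~\ref{lemma-1}. Your write-up is in fact slightly more explicit than the paper's about the bi-invariance computations and about which slot of Lemma~\ref{lemma-5} is being used.
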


\begin{proof}
By Lemma \ref{lemma-5}, $\m_1+\cdots+\m_{s-1}$ is the orthogonal complement of the nonzero $y\in\m_1$, with respect to both $\langle\cdot,\cdot\rangle$
and $g_y(\cdot,\cdot)$. Since we have $\langle y,[y,\m]_\m\rangle=\langle y,[y,\m]\rangle=\langle[y,y],\m\rangle=0$,
we also have $g_y(y,[y,\m]_\m)=0$. Using $[y,v]=0$, we can similarly prove
$g_y(y,[v,\m]_\m)=g_y(v,[y,\m]_\m)=0$. So the flag curvature for $(o,y,y\wedge v)$ vanishes by Lemma \ref{lemma-1}.
\end{proof}

Finally, we apply Lemma \ref{lemma-6} to $G/H=SU(2)/S^1_{k,l}$ with $k+l=0$
and the linearly independent commuting pair $y=e_1+e_2-2e_3\in\m_3$ and $v\in\g_{\pm(e_1-e_2)}\subset\m_1$, and see that this very standard homogeneous Finsler manifold can not be positively curved. This ends the proof of Theorem \ref{main-thm}.

\bigskip

\noindent
{\bf Acknowledgement}\quad
This paper is supported by National Natural Science Foundation of China (No. No. 12131012).


\begin{thebibliography}{99}

\bibitem{AD2001} J.C. \'{A}lvarez Paiva, C.E. Dur\'{a}n, Isometric submersion of Finsler manifolds, Proc. Amer. Math. Soc., 129 (2001), 2409-2417.

\bibitem{AW1975} S. Aloff and N. Wallach, An infinite family of distinct 7-manifolds admittiing positively curved Riemannian structures, Bull. Amer. Math. Soc., 81 (1975), 93-97.

\bibitem{Be1961} M. Berger, Les vari\'{e}t\'{e}s riemanniennes homog\`{e}nes
normales simplement connexes \`{a} courbure strictement positive, Ann. Scuola Norm. Sup. Pisa (3), 15 (1961), 179-246.

\bibitem{Be1976} L. \'{B}erard Bergery, Les vari\'{e}tes Riemannienes homog\'{e}nes simplement connexes de dimension impair \`{a} courbure strictement positive, J. Math. Pure Appl., 55 (1976), 47-68.

\bibitem{BCS2000} D. Bao, S.S. Chern, Z. Shen, An Introduction to Riemannian Finsler Geometry, Grad. Texts in Math. 200, Springer, New York, 2000.

\bibitem{CN2019} Z. Chen, Yu.G. Nikonorov, Geodesic orbit Riemannian spaces with two isotropy summands. I, Geom. Dedicata, 203 (2019), 163-178.

\bibitem{De2012} S. Deng, Homogeneous Finsler spaces, Springer, New York, 2012.

\bibitem{DX2017} S. Deng, M. Xu, Recent progress on homogeneous Finsler spaces with positive curvature, Euro. J. Math., S.I., 3 (4) (2017), 974-999.

\bibitem{Hu2015} L. Huang, On the fundamental equations of homogeneous Finsler spaces, Diff. Geom. Appl., 40 (2015), 187-208.
6
\bibitem{Ta1999} H. Tamaru, Riemannian g.o. spaces fibered over irreducible symmetric spaces, Osaka J. Math., 36 (1999), 835-851.

\bibitem{Wa1972} N. R.Wallach, Compact homogeneous Riemannian manifolds with strictly positive curvature, Ann. Math., 96 (1972), 277-295.

\bibitem{Wi1990} B. Wilking, The normal homogeneous space $(SU(3)\times SO(3))/U^*(2)$ has positive sectional curvature, Proc. Amer. Math. Soc., 127 (1990), 1191-1194.

\bibitem{WZ2018} B. Wilking, W. Ziller, Revisiting homogeneous spaces
with positive curvature, J. neine angew. Math., 738 (2018), 313-328.

\bibitem{Xu2022} M. Xu, Submersion and homogeneous spray geometry, J. Geom. Anal. 32 (2022), no. 172.

\bibitem{XD2017} M. Xu, S. Deng, Normal homogeneous Finsler spaces, Transform. Groups, 22 (4) (2017), 1143-1183.

\bibitem{XD2017-2} M. Xu, S. Deng, Towards the classification of odd dimensional homogeneous reversible Finsler spaces with positive flag curvature, Ann. Mat. Pura Appl. (4), 196 (4) (2017), 1459-1488.

\bibitem{XDHH2015} M. Xu, S. Deng, L. Huang, Z. Hu, Even dimensional homogeneous Finsler spaces with positive flag curvature. Indiana Univ. Math. J. arXiv:1407.3582

\bibitem{XW2015} M. Xu, J.A. Wolf, $Sp(2)/U(1)$ and a positive curvature problem, Diff. Geom. Appl., 42 (2015), 115-124.

\bibitem{XZ2017} M. Xu and W. Ziller, Reversible homogeneous Finsler metrics with positive flag curvature, Forum Math. 29 (5) (2017), 1212-1226

\bibitem{ZX2022} L. Zhang and M. Xu, Standard homogeneous $(\alpha_1,\alpha_2)$-metrics and geodesic orbit property, Math. N. 295 (7) (2022), 1443-1453.














\end{thebibliography}
\end{document}